\theoremstyle{plain}
\newtheorem{thm}{Theorem}[section]
\newtheorem{lemma}[thm]{Lemma}
\theoremstyle{definition}
\newtheorem{example}[thm]{Example}
\numberwithin{equation}{section}
\newcommand{\CC}{\ensuremath{\mathbb{C}}}
\newcommand{\ZZ}{\ensuremath{\mathbb{Z}}}
\newcommand\om{\omega}
\newcommand\la{\lambda}
\newcommand{\Lam}{\Lambda}
\DeclareMathOperator{\GL}{GL}
\DeclareMathOperator{\im}{im}
\newcommand{\HHH}{\ensuremath{\mathcal{H}}}
\def\eea{\end{eqnarray*}}
\def\bea{\begin{eqnarray*}}
\newcommand\dual{\mathrel{\raise3pt\hbox{$\underline{\mathrm{\thinspace d
\thinspace}}$}}}
\newcommand\qe{\ifhmode\unskip\nobreak\fi\quad $\Box$}       
\def\BOX{\hfill\lower.5\baselineskip\hbox{$\Box$}}
\newtheorem{theo}{Theorem}[section]
\newtheorem{remarkk}[theo]{Remark}
\newtheorem{prop}[theo] {Proposition}
\title [Hyperelliptic Threefolds]{The classification of Hyperelliptic threefolds}
\author{Fabrizio Catanese and Andreas Demleitner}
\address {Lehrstuhl Mathematik VIII\\
Mathematisches Institut der Universit\"at Bayreuth\\
NW II,  Universit\"atsstr. 30\\
95447 Bayreuth}
\email{Fabrizio.Catanese@uni-bayreuth.de \newline \hspace*{2.2cm} Andreas.Demleitner@uni-bayreuth.de}
\thanks{AMS Classification: 14K99, 14D99, 32Q15 \\
The present work took place in the framework of the 
 ERC Advanced grant n. 340258, `TADMICAMT' }
\date{\today}
\begin{document}

\maketitle

\begin{abstract}
We complete the classification of  hyperelliptic threefolds, describing in an elementary way  the hyperelliptic threefolds with group $D_4$.
These are algebraic and form an irreducible 2-dimensional family.

\end{abstract}


\section*{Introduction}

A Generalized Hyperelliptic Manifold $X$  is defined to be a quotient $ X = T / G$ of a complex torus $T$ by the free action of a finite group $G$ which contains no translations. 
We say that $X$ is a Generalized Hyperelliptic Variety if moreover the torus $T$ is projective, i.e., it is an Abelian variety $A$.

 The main purpose of the present paper is to complete the classification of the Generalized Hyperelliptic Manifolds of complex dimension three. The cases where the group $G$ is Abelian were classified by   H. Lange in  \cite{Lange}, using work of Fujiki \cite{Fujiki} and  the classification
 of the possible groups $G$ given by Uchida and Yoshihara in \cite{U-Y}:  the latter authors showed that the only possible
 non Abelian group is  the dihedral group $D_4$ of order $8$.
 
 This case was first excluded but it was later found that it does indeed occur (see \cite{C-D} for an account of the story and of the role of the paper \cite{Dekimpe}). Our paper is fully self-contained
 and show that the family described in \cite{C-D} gives all the possible hyperelliptic threefolds with group $D_4$. \\
 
 Our main theorem is the following
 
\begin{thm}\label{Dihedral}
	Let  $T$ be a complex torus of dimension $3$ admitting a  fixed point free action of the dihedral group $$G : = D_4 : = \langle r,s | r^4=1, s^2=1, (rs)^2 = 1\rangle,$$
	such that $G= D_4$ contains no translations.
	
	Then $T$ is algebraic. More precisely, there are two elliptic curves  $E, E'$ such that:
	
	(I) $T$ is a quotient $T  : = T' /  H, \  H \cong \ZZ/2$, where  
	$$T' : = E \times E \times E' = : E_1 \times E_2 \times E_3,\ \     $$
	$$  H : =  \langle \om\rangle , \ \   \om : = \left(h+k, h+k, 0\right) \in T' [2],$$
	
	and $h,k$ are 2-torsion element $h, k \in E[2]$, such that $h,k \neq 0, h + k  \neq 0$;

	(II) there is an element  $h' \in E'$   of order precisely $4$, such that, for $z = (z_1,z_2,z_3) \in T'$:
\begin{align*}
&r(z) = (z_2, - z_1, z_3+ h') = R(z_1,z_2,z_3) + \left(0,0,h'\right), \\
&s(z) = \left(z_1+ h , -z_2 + k , -z_3\right) = S(z_1,z_2,z_3) +  \left( h, k , 0 \right).
\end{align*}

Conversely, the above formulae give a fixed point free action of the dihedral group $G= D_4$ which contains no translations.

In particular, we have the following normal form:

$$ E = \CC / (\ZZ + \ZZ \tau), \ \  E' = \CC / (\ZZ + \ZZ \tau'), \ \ \tau, \tau' \in \HHH : = \{ z \in \CC| Im ( z) > 0 \}, $$
 $$h = 1/2, k =  \tau /2 , h' = 1/4$$
$$ r (z_1, z_2,z_3) : = (z_2, - z_1, z_3 + 1/4) $$
$$ s (z_1, z_2, z_3) : = (z_1 + 1/2 , - z_2 + \tau /2, - z_3 ) .$$

In particular, the Teichm\"uller space of hyperelliptic threefolds with group $D_4$  is isomorphic to the product $\HHH^2$
of two upper halfplanes.
\end{thm}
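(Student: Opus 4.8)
The plan is to reduce the classification to the representation theory of $D_4$ together with a fixed-point analysis, and then to read off the integral and complex-analytic data. Writing each element as $g(z) = \rho(g)z + b_g$ with linear part $\rho\colon G \to \GL(3,\CC)$ and translation part $b_g \in T$, an element is a translation exactly when $\rho(g) = \mathrm{Id}$, so the hypothesis that $G$ contains no translations is equivalent to $\rho$ being faithful. Since every one-dimensional character of $D_4$ kills $r^2$, a faithful representation must contain the unique two-dimensional irreducible $\rho_2$, whence by complete reducibility $\rho = \rho_2 \oplus \chi$ for a character $\chi$. I would then use the standard criterion that an affine map $z \mapsto Az + b$ on a torus can be fixed-point-free only if $1$ is an eigenvalue of $A$ (otherwise $A - \mathrm{Id}$ is an isogeny and $b$ lies in its image). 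Applied to $r$ and $r^2$, whose $\rho_2$-eigenvalues are $\pm i$ and $-1$, this forces $\chi(r) = 1$. If moreover $\chi(s) = 1$, then all of $G$ would act on the elliptic curve $E'$ underlying the summand $V_1$ purely by translations; the resulting map $G \to E'$ is a homomorphism and factors through $G^{\mathrm{ab}} = (\ZZ/2)^2$, so the translation $h'$ of $r$ would satisfy $2h' = 0$, contradicting fixed-point-freeness of $r^2$ (which forces $2h' \neq 0$). Hence $\chi(s) = -1$, and the same computation gives $4h' = 0$ with $h', 2h' \neq 0$, so $h'$ has order exactly $4$.

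Second I would pin down the integral structure. The central involution $r^2$ has linear part $\diag(-1,-1,1)$, so its eigenspaces split $\Lambda$, up to index a power of $2$, as $\Lambda_2 \oplus \Lambda_1$ with $\Lambda_1 \subset V_1$ and $\Lambda_2 \subset V_2$; thus $T$ is isogenous to $T_2 \times E'$, where $E' = V_1/\Lambda_1$ is an elliptic curve and $T_2 = V_2/\Lambda_2$ a surface. On $V_2$ the element $r$ satisfies $R_2^2 = -\mathrm{Id}$, making $\Lambda_2$ a module over $\ZZ[R_2] \cong \ZZ[i]$, while the reflection $s$ has two eigen-lines that $r$ interchanges. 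Intersecting $\Lambda_2$ with these eigen-lines yields two rank-$2$ lattices that $r$ carries to one another, so the associated elliptic curves share a single modulus $\tau$ and $\Lambda_2$ is an overlattice of $\Lambda_E \times \Lambda_E$. Propagating the translation parts through the group relations and imposing fixed-point-freeness for the remaining elements $s, rs, r^2s, r^3s$ on the quotient then identifies the gluing vector as $\omega = (h+k,h+k,0)$ and shows that $h,k$ must be two \emph{distinct} nonzero points of $E[2]$ with $h+k \neq 0$. This produces presentation (I), (II); since $T = T'/H$ with $T' = E \times E \times E'$ abelian and $H$ finite, $T$ is algebraic.

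The converse is a direct verification: one checks that the displayed formulas satisfy $r^4 = s^2 = (rs)^2 = \mathrm{Id}$ on $T$, that $\rho$ is faithful (no translations), and that each of the seven nontrivial elements is fixed-point-free, the latter again by the eigenvalue criterion applied on $T = T'/H$, where a putative fixed point upstairs is allowed to differ by $\omega$. The normal form then follows by taking $E = \CC/(\ZZ+\ZZ\tau)$ with $h = 1/2$, $k = \tau/2$ (so that $h+k = (1+\tau)/2$ is the third nonzero $2$-torsion point), and $h' = 1/4$ on $E' = \CC/(\ZZ+\ZZ\tau')$.

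Finally, for the Teichm\"uller space I would fix the underlying real torus with its $\ZZ[G]$-lattice and describe the hyperelliptic complex structures as the $G$-invariant complex structures, that is $J \in \End_G(\Lambda_\RR)$ with $J^2 = -\mathrm{Id}$ and the correct orientation. Over $\QQ$ one has $\Lambda_\QQ \cong W^{\oplus 2} \oplus \chi^{\oplus 2}$ with $W$ and $\chi$ absolutely irreducible, so the commutant is $M_2(\RR) \oplus M_2(\RR)$, and in each factor the oriented solutions of $J^2 = -\mathrm{Id}$ form a copy of $\HHH$; these record precisely the moduli $\tau$ and $\tau'$. Hence the Teichm\"uller space is $\HHH \times \HHH = \HHH^2$. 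I expect the genuinely delicate step to be the integral bookkeeping of the second paragraph, namely extracting the exact overlattice and the conditions on $h,k$ from the fixed-point-free requirement on the quotient $T'/H$, rather than the representation theory or the final moduli count, which are comparatively formal.
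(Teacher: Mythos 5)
Your overall strategy coincides with the paper's: decompose $\rho$ as the $2$-dimensional irreducible plus a character, use the criterion that a fixed-point-free affine map must have $1$ as an eigenvalue of its linear part to normalize $R$, rule out the case $\chi(s)=+1$, split $T$ up to isogeny as $E\times E\times E'$, and then determine the gluing subgroup $H$. The steps you do carry out are argued correctly, and your elimination of the case $\chi(s)=+1$ via the homomorphism $G\to E'$ sending $g$ to its third translation component, which factors through $G^{\mathrm{ab}}\cong(\ZZ/2)^2$ and hence forces $2h'=0$ against the freeness of $r^2$, is a clean repackaging of the paper's computation with $s^2$ and $(rs)^2$.

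However, there is a genuine gap exactly where you predict one: the sentence ``propagating the translation parts through the group relations and imposing fixed-point-freeness \dots identifies the gluing vector as $\omega=(h+k,h+k,0)$'' asserts the theorem rather than proving it. Nothing in your argument shows (a) that $H$ has exponent $2$ --- the paper needs the two-step decomposition $2\lambda=(I+S)\lambda+(I-S)\lambda$ and $2\lambda'=(I+R^2)\lambda'+(I-R^2)\lambda'$ to get $\Lambda\subset\frac12\Lambda_1+\frac12\Lambda_2+\frac14\Lambda_3$, and then the embedding $E_3\hookrightarrow T$ to improve $\frac14\Lambda_3$ to $\frac12\Lambda_3$; nor (b) that $H$ is nontrivial and generated by $\omega$ rather than, say, containing a second independent element --- this requires combining the relation $(rs)^2=1$, which puts $(h+k,-(h+k),0)$ into $H$, with the full list of exclusions coming from freeness of $r$, $r^2$, $s$, $rs$, and then an argument that every element of $H$ has both of its first two coordinates a multiple of $h+k$ and trivial third coordinate; nor (c) the precise conditions $h\neq 0$, $k\neq 0$, $h+k\neq 0$, each of which is extracted from a specific one of those exclusions applied to $0\in H$ or to $\omega$. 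Without this bookkeeping one cannot exclude, for instance, $H\cong(\ZZ/2)^2$ or a gluing vector with nonzero third coordinate, so the classification is not actually established. The remaining items (the converse verification, the normal form, and the Teichm\"uller space count via the commutant $M_2(\RR)\oplus M_2(\RR)$) are fine as sketched and consistent with the paper, which delegates the last point to the cited reference.
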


\section{Proof of the main theorem}

We use the following notation: $T = V/\Lam$ is  a complex torus of dimension $3$, which admits a free action of the group 
$$G = \langle r,s | r^4 = s^2 = (rs)^2 = 1\rangle \cong D_4,$$ such that the complex representation $\rho \colon G \to \GL(3, \CC)$ is faithful. 

A first observation is that the complex representation $\rho$ of $G$ must contain the $2$-dimensional
irreducible  representation $V_1$ of $G$ (else, $\rho$ would be a direct sum of 1-dimensional representations: this, by the assumption
on the faithfulness of $\rho$, would imply that $G$ is Abelian, a contradiction). 

Hence we have a splitting 
$$V = V_1 \oplus V_2,$$ where $V_2 $ is 1-dimensional, and we can choose an appropriate basis so that,
setting   $R := \rho(r), S := \rho(s)$,   we are left with the two cases

\begin{align*}
&\textbf{Case 1:} \;\; R = \begin{pmatrix}
0 & 1 & \\
-1 & 0 & \\
&& 1
\end{pmatrix},\;\;\; S = \begin{pmatrix}
1 & & \\
 & -1 & \\
&& -1
\end{pmatrix}, \\
&\textbf{Case 2:} \;\; R = \begin{pmatrix}
0 & 1 & \\
-1 & 0 & \\
&& 1
\end{pmatrix},\;\;\; S = \begin{pmatrix}
1 & & \\
& -1 & \\
&& 1
\end{pmatrix}.
\end{align*}
which are distinguished by the multiplicity of the eigenvalue $1$ of $S$. 

Indeed  $R$ is necessarily of the form above, since the freeness of the $G$-action implies that $\rho(g)$ must have eigenvalue
$1$ for every $g \in G$. \\

\begin{lemma}
In both Cases 1 and 2, the complex torus $T = V/\Lam$ is isogenous to a product of three elliptic curves, $T \sim_{\text{isog.}} E_1 \times E_2 \times E_3$, where $E_i \subset T$, for $i=1,2,3$ and $E_1$ and $E_2$ are isomorphic elliptic curves. In other words, writing $E_j = W_j/\Lam_j$, the complex torus $T$ is isomorphic to $$(E_1 \times E_1 \times E_3)/H, \ \ H =  \Lam/(\Lam_1 \oplus \Lam_2 \oplus \Lam_3).$$
\end{lemma}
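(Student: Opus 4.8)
The plan is to split $V$ into the three complex eigenlines singled out by the chosen basis and to show that the lattice $\Lam$ is compatible with this splitting up to finite index; crucially this must be done without appealing to Poincar\'e complete reducibility, since the algebraicity of $T$ is only established later in the theorem. Write $V = W_1 \oplus W_2 \oplus W_3$, where $W_3 = V_2$ is the line carrying the one-dimensional summand, and $W_1, W_2 \subset V_1$ are the two eigenlines of $S$ (for the eigenvalues $+1$ and $-1$ respectively). Set $\Lam_j := \Lam \cap W_j$. Since each $W_j$ is a complex line ($\dim_\RR W_j = 2$), the whole statement reduces to the single assertion that each $\Lam_j$ is a lattice of full rank $2$ in $W_j$: granting this, $E_j := W_j/\Lam_j$ is an elliptic curve, the inclusion $\Lam_1 \oplus \Lam_2 \oplus \Lam_3 \subseteq \Lam$ is of finite index, and $T = V/\Lam = (E_1 \times E_2 \times E_3)/H$ with $H = \Lam/(\Lam_1 \oplus \Lam_2 \oplus \Lam_3)$ finite, which is the desired isogeny.

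The main step is therefore to produce full-rank sublattices of each $W_j$, and here I would use the rational idempotents of $\QQ[G]$. Since $r^2$ is central in $D_4$ and commutes with $s$, and $R^2 = \diag(-1,-1,1)$, the elements
$$\pi_3 := \tfrac12(1 + r^2), \quad \pi_1 := \tfrac14(1 - r^2)(1 + s), \quad \pi_2 := \tfrac14(1 - r^2)(1 - s)$$
are orthogonal idempotents of $\QQ[G]$ (using $r^4=1$ and $s^2=1$) whose images in $\GL(V)$ are the projections of $V$ onto $W_3$, $W_1$, $W_2$ respectively; in both Cases 1 and 2, intersecting the $(-1)$-eigenspace of $r^2$ with the $(\pm1)$-eigenspace of $s$ isolates the line $W_1$, resp.\ $W_2$, irrespective of how $S$ acts on $W_3$. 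Now $2\pi_3 = 1 + r^2$ and $4\pi_1 = (1 - r^2)(1+s)$, $4\pi_2 = (1-r^2)(1-s)$ are integral combinations of elements of $G$; since $\Lam$ is $G$-stable they map $\Lam$ into $\Lam$, and by construction they map it into $W_3, W_1, W_2$ respectively. Thus, for instance, $4\pi_1(\Lam) \subseteq \Lam \cap W_1 = \Lam_1$; as $\pi_1$ is surjective onto $W_1$ and $\Lam$ spans $V$ over $\RR$, the image $4\pi_1(\Lam)$ spans $W_1$, hence is already a rank-$2$ lattice. Therefore $\Lam_1$ has rank $2$, and likewise $\Lam_2, \Lam_3$, completing the main step.

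It remains to identify $E_1$ with $E_2$. From the matrix $R$ one reads $R W_1 = W_2$ (indeed $R v_1 = -v_2$ on the chosen basis vectors), and since $R$ preserves $\Lam$ it carries $\Lam_1 = \Lam \cap W_1$ isomorphically onto $\Lam \cap W_2 = \Lam_2$; hence $R$ descends to an isomorphism $E_1 \xrightarrow{\sim} E_2$. The one point requiring care --- and the only genuine obstacle --- is the full-rank claim for the $\Lam_j$, i.e.\ that the abstract splitting of $V$ into eigenlines is realised, up to finite index, at the level of the lattice; the idempotent argument above is exactly what replaces the (here unavailable) complete reducibility and makes the product decomposition integral.
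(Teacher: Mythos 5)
Your proof is correct and is essentially the paper's argument: the paper defines $E_1 = \im(S+I) = \ker(S-I)^0$ (resp.\ $E_2 = \im(S-I)$ in Case 2), $E_3 = \ker(R-I)^0$ and $E_2 = R(E_1)$, and your integral group-ring elements $(1+r^2)$, $(1-r^2)(1\pm s)$ are just scalar multiples of these same operators, packaged so as to treat both cases uniformly. The only difference is one of detail: you spell out the lattice-rank argument (that $\Lam\cap W_j$ has full rank because an integral projector maps the $G$-stable lattice $\Lam$ onto a spanning discrete subgroup of $W_j$), which the paper subsumes under the standard facts that images and identity components of kernels of torus endomorphisms are subtori.
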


\begin{proof}
Let $I$ be the identity of $T$.

In Case 1, we set $E_1 := \ker(S-I)^0 = \im(S+I)$, $E_3 := \ker(R-I)^0$ and $E_2 := R(E_1)$ (here, the superscript zero denotes the connected component of the identity). Then it is clear that $E_1 \cong E_2$, and that $T$ is isogenous to $E_1 \times E_2 \times E_3$. \\
In Case 2, we define similarly $E_2 := \ker(S+I)^0 = \im(S-I)$, $E_3 := \ker(R-I)_0$ and $E_1 := R(E_2)$. We obtain again $E_1 \cong E_2$, and that $T$ is isogenous to $E_1 \times E_2 \times E_3$.

\end{proof}

\begin{lemma} Writing $E_j = W_j/\Lam_j$, the following statements hold.
\begin{itemize}
	\item[(1)] In Case 1, the lattice $\Lam_2$ is equal to $W_2 \cap \Lam$.
  \item[(2)] In Case 2, the lattice $\Lam_1$ is equal to $W_1 \cap \Lam$.  
	\end{itemize}
\end{lemma}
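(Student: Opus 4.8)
The plan is to reduce both statements to a single transport-of-structure argument, exploiting that the ``derived'' factor in each case is obtained by applying $R$ to a factor whose lattice is manifestly the intersection lattice. I would treat Case 1 in detail and then obtain Case 2 by interchanging the roles of $E_1$ and $E_2$, since the two situations are formally identical.

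First I would record the two inputs coming from the construction in the previous lemma. Since $E_1 = \ker(S - I)^0$ is the identity component of the fixed subgroup of the linear automorphism $S$, its tangent space is $W_1 = \ker(S - I \colon V \to V)$ and its lattice is $\Lam_1 = W_1 \cap \Lam$; here one uses that $S$ preserves $\Lam$, so that $W_1$ is a rational subspace of $V$ and $W_1 \cap \Lam$ is a full sublattice of $W_1$. The second input is that $E_2$ is \emph{defined} as $R(E_1)$, so that a priori $W_2 = R(W_1)$ and the lattice of $E_2$ is the image lattice $\Lam_2 = R(\Lam_1)$. Thus the actual content of the lemma is the identification of this image lattice with the intersection lattice $W_2 \cap \Lam$.

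The key point is that $R = \rho(r)$ is the linear part of an automorphism of $T = V/\Lam$, whence $R(\Lam) = \Lam$ and therefore also $R^{-1}(\Lam) = \Lam$. Using that $R$ is a bijection of $V$ carrying $W_1$ onto $W_2$ and $\Lam$ onto itself, intersections are preserved, and I would compute
\[
\Lam_2 = R(\Lam_1) = R(W_1 \cap \Lam) = R(W_1) \cap R(\Lam) = W_2 \cap \Lam,
\]
which is the assertion. If one prefers to avoid the set-theoretic identity $R(A\cap B)=R(A)\cap R(B)$, the two inclusions can be checked directly: $R(\Lam_1) \subseteq W_2 \cap \Lam$ because $R(\Lam_1) \subseteq R(W_1) = W_2$ and $R(\Lam_1) \subseteq R(\Lam) = \Lam$; conversely any $v \in W_2 \cap \Lam$ satisfies $R^{-1}(v) \in W_1 \cap \Lam = \Lam_1$, so that $v \in R(\Lam_1)$.

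For Case 2 the construction instead gives $E_2 = \ker(S + I)^0$, with lattice $\Lam_2 = W_2 \cap \Lam$ by the same eigenspace reasoning, and $E_1 = R(E_2)$; the identical computation then yields $\Lam_1 = R(\Lam_2) = W_1 \cap \Lam$. The only non-formal ingredient in either case is the lattice invariance $R(\Lam) = \Lam$, which is precisely the condition that $r$ descends to an automorphism of the torus; everything else is the bookkeeping fact that a lattice-preserving linear bijection commutes with the operation $U \mapsto U \cap \Lam$ on rational subspaces $U$. I therefore expect no genuine obstacle beyond making the a priori identification $\Lam_2 = R(\Lam_1)$ explicit and then invoking the $R$-invariance of $\Lam$; the mild point worth stating carefully is the rationality of the eigenspace $W_1$ (equivalently, that $W_1\cap\Lam$ is of full rank), which guarantees that $E_1$, and hence $E_2$, is a bona fide subtorus.
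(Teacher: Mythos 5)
Your proof is correct and follows essentially the same route as the paper: both arguments rest on the a priori identification $\Lam_2 = R(\Lam_1)$, the fact that $\Lam_1 = W_1\cap\Lam$ because $E_1$ is cut out as an eigenspace subtorus, and the $R$-invariance of $\Lam$ to transport the intersection lattice from $W_1$ to $W_2$. The only cosmetic difference is that the paper obtains the reverse inclusion by applying $R$ to $R(W_2\cap\Lam)\subset\Lam_1$ rather than applying $R^{-1}$ to an element of $W_2\cap\Lam$, which is the same computation.
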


\begin{proof}
(1) Obviously, $E_2 = R(E_1) = W_2/R(\Lam_1)$, i.e., $\Lam_2 = R(\Lam_1) \subset W_2 \cap \Lam$. On the other hand, $R(W_2 \cap \Lam) \subset W_1 \cap \Lam = \Lam_1$, and applying the automorphism $R$ of $\Lam$ gives $W_2 \cap \Lam \subset R(\Lam_1) = \Lam_2$. \\

(2) Here, $E_1 = R(E_2) = W_1/R(\Lam_2)$, i.e., $\Lam_1 = R(\Lam_2) \subset W_1 \cap \Lam$. For the converse inclusion, observe $R(W_1 \cap \Lam) \subset W_2 \cap \Lam = \Lam_2$, and applying $R$ yields again the result.

\end{proof}

We can now choose coordinates on $V$ such that $r$ is induced by a transformation of the form 
$$ r (z_1, z_2,z_3)  = (z_2, - z_1, z_3 + c_3) ,$$
by choosing as the origin in $V_1$ a fixed point of the restriction of $r$ to $V_1$.

We can now view $r,s$ as affine self maps of $T$ induced by affine self maps of $E_1 \times E_2 \times E_3$
of the form 
$$ r (z_1, z_2,z_3)  = (z_2, - z_1, z_3 + c_3) ,$$
$$ s (z_1, z_2, z_3) : = (z_1 + a_1 , - z_2 + a_2, \pm  z_3 + a_3) ,$$
and sending the subgroup $H$ to itself.

\begin{lemma}\label{r-free}
 The freeness of the action of the powers of $r$ is equivalent to:
 $H$ contains no element with last coordinate equal to $c_3$, or $2 c_3$.
 
 Moreover, $(0,0,4c_3) \in H$.
 \end{lemma}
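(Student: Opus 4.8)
The plan is to reduce the freeness of each nontrivial power of $r$ to a condition on last coordinates only, by showing that the first two coordinates never obstruct the existence of a fixed point. Writing $T' := E_1 \times E_2 \times E_3$ and letting $R_1 = \begin{pmatrix} 0 & 1 \\ -1 & 0 \end{pmatrix}$ denote the linear part of $r$ on the $V_1$-factor, I would first record
\begin{align*}
r(z) &= (z_2, -z_1, z_3 + c_3), & r^2(z) &= (-z_1, -z_2, z_3 + 2c_3), \\
r^3(z) &= (-z_2, z_1, z_3 + 3c_3), & r^4(z) &= (z_1, z_2, z_3 + 4c_3).
\end{align*}
Since $r^4 = 1$ in $G$ and $r$ acts on $T = T'/H$, the map $r^4$ is the identity of $T$; but on $T'$ it is translation by $(0,0,4c_3)$, so this translation must be trivial modulo $H$, giving $(0,0,4c_3) \in H$. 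This is the final assertion.

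For the equivalence, I would use that $r$ descends to $T$, so $R(H) = H$ and the condition $r^j(z) - z \in H$ does not depend on the chosen representative of $[z]$. Hence $[z] \in T$ is fixed by $r^j$ exactly when $r^j(z) - z \in H$. From the table above, the last coordinate of $r^j(z) - z$ equals $j c_3$ independently of $z$, whereas the first two coordinates are $(R_1^j - I)(z_1, z_2)$. The crucial observation is that for $j = 1, 2, 3$ the endomorphism $R_1^j - I$ of $E_1 \times E_2$ is an isogeny: $R_1$ has eigenvalues $\pm i$, so $R_1^j$ has eigenvalues $(\pm i)^j \neq 1$ and thus $\det(R_1^j - I) \neq 0$. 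Being an isogeny, $R_1^j - I$ is surjective on $E_1 \times E_2$, so for any prescribed $(\eta_1, \eta_2)$ we may solve for $(z_1, z_2)$. Therefore $r^j$ has a fixed point on $T$ if and only if $H$ contains an element with last coordinate $j c_3$.

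It remains to combine the three cases. For $j = 3$, note that $r^3 = r^{-1}$ has the same fixed points as $r$; equivalently, using $(0,0,4c_3) \in H$, an element of $H$ with last coordinate $3c_3$ exists precisely when one with last coordinate $c_3$ does (negate and add $(0,0,4c_3)$). Thus freeness of $r$ and of $r^3$ are governed by the same condition, and the action of all powers of $r$ is free if and only if $H$ contains no element with last coordinate $c_3$ and no element with last coordinate $2c_3$, as claimed. I expect the only real obstacle to be the surjectivity of $R_1^j - I$ on the torus $E_1 \times E_2$, which is exactly what frees the first two coordinates; the remainder is a direct computation.
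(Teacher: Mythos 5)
Your proof is correct and follows essentially the same route as the paper: both reduce each fixed-point equation $r^j(z)=z$ to a membership condition in $H$ and then observe that the endomorphism of $E_1\times E_2$ given by the first two coordinates is surjective, so only the last coordinate matters. Your uniform eigenvalue argument for the surjectivity of $R_1^j-I$ and your explicit treatment of $j=3$ are minor packaging differences from the paper, which checks $j=1,2$ by writing the maps out directly and leaves $j=3$ implicit.
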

 \begin{proof}
 $r(z) = z$ is equivalent to $(z_1 - z_2, z_1 + z_2 , - c_3) \in H$.
 However, the endomorphism  
 $$(z_1, z_2) \mapsto (z_1 - z_2, z_1 + z_2 )$$
 of $E_1 \times E_2$ is surjective, hence $H$ cannot contain any element with last coordinate equal to $c_3$.
 
 Since $r^2(z) = (-z_1, -z_2, z_3 + 2 c_3)$,  
 $r^2(z) = z$ is equivalent to $(-2z_1 , -2 z_2 , 2 c_3) \in H$, and we reach the similar conclusion that
 $H$ cannot contain any element with last coordinate equal to $2c_3$.
 
 Finally, the condition that $r^4$ is the identity is equivalent to $(0,0,4c_3) \in H$.

 \end{proof}

\begin{prop}
 Case 2 does not occur.
 \end{prop}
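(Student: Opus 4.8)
The plan is to derive a contradiction from the group relation $(rs)^2 = 1$ by tracking what the affine maps do on the third coordinate, where in Case~2 both $r$ and $s$ act by translation rather than by a nontrivial linear part. Recall that in Case~2 we have $S = \diag(1,-1,1)$, so on $E_3$ the element $s$ acts as $z_3 \mapsto z_3 + a_3$, while $r$ acts on $E_3$ as $z_3 \mapsto z_3 + c_3$. The key structural observation is that in Case~2 the subgroup $G$ acts on the third factor $E_3$ entirely through translations, so the image of $G$ in $\Aut(E_3)$ is an abelian group of translations. I would first compute the composite $rs$ and its square explicitly on each coordinate, exploiting that $R$ has the same form in both cases while $S$ differs only in the sign of the third diagonal entry.

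First I would write out $rs$ and $(rs)^2$ as affine maps of $E_1 \times E_2 \times E_3$. Using $R$ as given and $S = \diag(1,-1,1)$ in Case~2, the linear part of $rs$ on the first two coordinates is $RS$, and one checks $(RS)^2 = -I$ on $V_1$ (just as in Case~1, since the third coordinate does not interact with the first two under these matrices). The crucial difference appears in the third coordinate: here both $r$ and $s$ translate, so the third coordinate of $(rs)^2(z)$ equals $z_3 + 2(c_3 + a_3)$, with \emph{no} sign cancellation. The relation $(rs)^2 = 1$ in $G$ then forces $(rs)^2$ to act as the identity on $T$, which on the third coordinate requires $(0,0,2(c_3+a_3)) \in H$ together with the vanishing (modulo $H$) of the first-two-coordinate contribution coming from $(RS)^2 = -I$.

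The main obstacle, and the heart of the argument, is to show that this is incompatible with the freeness of the action, in the spirit of Lemma~\ref{r-free}. The idea is that freeness of the powers of $r$ (and analogously of $rs$, which is also an order-$4$ element whose square must act freely) imposes that $H$ cannot contain elements with prescribed third coordinate while the first-two-coordinate endomorphism remains surjective. Concretely, I would run the same computation as in Lemma~\ref{r-free} but for the element $rs$: since the linear part on $V_1$ is again of the rotation type with $(RS)^2 = -I$, the map $(z_1,z_2) \mapsto$ (first two coordinates of $(rs)^2(z) - z)$ is a surjective endomorphism of $E_1 \times E_2$, so the constraint $(rs)^2 = \Id$ would force $H$ to contain an element whose third coordinate is $2(c_3 + a_3)$ with arbitrary first two coordinates, and in particular a nontrivial fixed point would arise unless $2(c_3+a_3)$ lands in a forbidden class.

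The cleanest route is likely to confront the two relations $s^2 = 1$ and $(rs)^2 = 1$ simultaneously on $E_3$: in Case~2, $s^2$ acts on the third coordinate as $z_3 \mapsto z_3 + 2a_3$, and since $s^2 = 1$ we need $(0,0,2a_3) \in H$; combined with $(0,0,4c_3)\in H$ from Lemma~\ref{r-free} and the new relation $(0,0,2(c_3+a_3)) \in H$, one extracts that $(0,0,2c_3) \in H$. But this directly contradicts the conclusion of Lemma~\ref{r-free}, which asserts that $H$ contains no element with last coordinate equal to $2c_3$. I expect the extraction of $(0,0,2c_3)\in H$ from the three membership conditions to be the decisive step, and verifying that the freeness lemma genuinely applies to $rs$ (i.e.\ that $rs$ really is an order-$4$ element with $(RS)^2=-I$ on $V_1$, so that the surjectivity argument goes through) to be the point requiring the most care.
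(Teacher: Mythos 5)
Your proposal follows essentially the same route as the paper: combine the membership conditions coming from $s^2=1$ and $(rs)^2=1$ to produce an element of $H$ whose last coordinate is $2c_3$, contradicting Lemma \ref{r-free}. One caveat: the relations actually give the full-vector memberships $(2a_1,0,2a_3)\in H$ and $(a_1+a_2,-(a_1+a_2),2(a_3+c_3))\in H$, not the coordinate-wise statements $(0,0,2a_3)\in H$ and $(0,0,2(c_3+a_3))\in H$ that you write; but since their difference $(a_2-a_1,-(a_1+a_2),2c_3)$ still has last coordinate $2c_3$, and Lemma \ref{r-free} forbids \emph{any} element of $H$ with that last coordinate, your contradiction goes through unchanged.
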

 \begin{proof}

Since we assume that 
$$ s (z_1, z_2, z_3) : = (z_1 + a_1 , - z_2 + a_2,   z_3 + a_3) ,$$
and that $s^2$ is the identity, it must be $$(2a_1, 0 , 2 a_3) \in H.$$

Consider now $rs$:
$$ rs(z) = ( -z_2 + a_2, - z_1 - a_1, z_3 + a_3 + c_3). $$

The condition that $(rs)^2$ is the identity is equivalent to:
$$(a_1 + a_2, -(a_1 + a_2), 2 ( a_3 + c_3)) \in H.$$

This condition, plus the previous one, imply that 
$$(a_2 - a_1, - (a_1 + a_2), 2  c_3) \in H,$$
contradicting Lemma \ref{r-free}.

 \end{proof}
 
 Henceforth we shall assume that we are in Case 1, and we can choose the origin in $E_3$ so that 
 $$ s (z_1, z_2, z_3) : = (z_1 + a_1 , - z_2 + a_2,   - z_3 ) .$$
 
 \begin{lemma}\label{s-free}
 If  $$ s (z_1, z_2, z_3) : = (z_1 + a_1 , - z_2 + a_2,   - z_3 ),$$
 then $$(2a_1, 0 ,0) \in H$$ and $H$ contains no element of the 
 form $$(a_1, w_2,w_3).$$

 \end{lemma}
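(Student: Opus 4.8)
The plan is to mirror the structure of the proof of Lemma \ref{r-free}, reducing both assertions to a direct computation of $s^2$ and of the displacement $s(z)-z$, combined with the standard surjectivity of multiplication-by-$2$ endomorphisms on a complex torus of dimension one.

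First I would compute $s^2$ on $E_1 \times E_2 \times E_3$. Applying $s$ twice to $z = (z_1, z_2, z_3)$ gives
$$s^2(z) = s(z_1 + a_1, -z_2 + a_2, -z_3) = (z_1 + 2a_1, z_2, z_3),$$
so that $s^2(z) - z = (2a_1, 0, 0)$ independently of $z$. Since $s^2$ is required to be the identity on $T = (E_1 \times E_2 \times E_3)/H$, this forces $(2a_1, 0, 0) \in H$, which is the first claim.

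For the freeness statement, I would observe that a point $\bar z \in T$ is fixed by $s$ precisely when $s(z) - z \in H$. Computing the displacement,
$$s(z) - z = (a_1, -2z_2 + a_2, -2z_3).$$
As $(z_2, z_3)$ ranges over $E_2 \times E_3$, the map $(z_2, z_3) \mapsto (-2z_2 + a_2, -2z_3)$ is surjective onto $E_2 \times E_3$, because multiplication by $2$ is a surjective endomorphism of each one-dimensional complex torus, and translation by $a_2$ is a bijection. Hence the set of all values taken by $s(z) - z$ is exactly $\{a_1\} \times E_2 \times E_3$, i.e.\ the set of elements of $E_1 \times E_2 \times E_3$ whose first coordinate equals $a_1$. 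Therefore $s$ acts freely if and only if $H$ contains no element of the form $(a_1, w_2, w_3)$, which is the second claim.

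The only substantive ingredient is the surjectivity of multiplication by $2$ on an elliptic curve, which is standard (it is an isogeny); this is the exact analogue of the surjectivity of $(z_1,z_2) \mapsto (z_1 - z_2, z_1 + z_2)$ exploited in Lemma \ref{r-free}. I therefore expect no genuine obstacle: once the two short computations above are carried out, both assertions follow at once.
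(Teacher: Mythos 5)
Your proof is correct and follows essentially the same route as the paper's: the first claim comes from computing $s^2(z)-z=(2a_1,0,0)$, and the second from noting that $s(z)=z$ in $T$ means $(a_1,-2z_2+a_2,-2z_3)\in H$ together with the surjectivity of multiplication by $2$ on $E_2\times E_3$. The paper's proof is just a more compressed version of exactly this argument.
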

 \begin{proof}
 The first condition is equivalent to $s^2$ being the identity, while the second
 is equivalent to the condition that $s$ acts freely, since
 $s(z) = z$ is equivalent to $(a_1 , -2 z_2 + a_2 , - 2 z_3) \in H$.

 \end{proof}
 
\begin{prop}
For each $\la \in \Lam$ there exist $\la' \in \Lam, \la_1 \in \Lam_1, \la_2 \in \Lam_2, \la_3 \in \Lam_3,$,
such that
$$  2 \la = \la_1 + \la', \ \ 2 \la' = \la_2 + \la_3 $$

More precisely, we even have: $$\Lam \subset (1/2 ) \Lam_1 + (1/2 ) \Lam_2 + (1/4)  \Lam_3.$$

\
\end{prop}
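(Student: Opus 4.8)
The plan is to use a single structural input — the integrality of the representation — and reduce everything to eigenspace bookkeeping. Since $r$ and $s$ are affine automorphisms of $T = V/\Lam$, their linear parts $R$ and $S$, and hence $R^2$, preserve $\Lam$; therefore every operator in $\ZZ[R,S]$ maps $\Lam$ into itself. On $V = W_1\oplus W_2\oplus W_3$ we have $S = \diag(1,-1,-1)$ and $R^2 = \diag(-1,-1,1)$, so
\[ S+I = \diag(2,0,0), \qquad -(S+R^2) = \diag(0,2,0), \qquad R^2+I = \diag(0,0,2). \]
In other words, twice the projector $P_j$ onto $W_j$ is realized by an integral operator, so $2P_j\lambda \in \Lam$ for every $\lambda\in\Lam$; and since $2P_j\lambda \in W_j$, it lies in $W_j\cap\Lam = \Lam_j$.

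I would first record the identification $\Lam_j = W_j\cap\Lam$ for all three $j$. For $j=2$ this is the preceding Lemma; $\Lam_1 = W_1\cap\Lam$ then follows by applying $R^{-1}$ to $\Lam_2 = R(\Lam_1) = W_2\cap\Lam$ (using $R(W_1) = W_2$ and $R\Lam = \Lam$); and $\Lam_3 = W_3\cap\Lam$ holds because $E_3 = \ker(R-I)^0$ is a subtorus of $T$ with tangent space $W_3$, so its defining lattice is the full intersection $W_3\cap\Lam$.

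With this in hand the first assertion is immediate. Given $\lambda\in\Lam$, set $\lambda_1 := (S+I)\lambda$ and $\lambda' := (I-S)\lambda$. Integrality gives $\lambda_1,\lambda'\in\Lam$, and since $S+I$ kills $W_2\oplus W_3$ while $I-S$ kills $W_1$, we have $\lambda_1\in\Lam_1$ and $\lambda'\in(W_2\oplus W_3)\cap\Lam$, with $2\lambda = \lambda_1+\lambda'$ because $(S+I)+(I-S)=2I$. Applying the analogous splitting to $\lambda'$ via $R^2$, namely $\lambda_2 := (I-R^2)\lambda'\in\Lam_2$ and $\lambda_3 := (I+R^2)\lambda'\in\Lam_3$ (the $W_1$-component of $\lambda'$ already vanishing), yields $2\lambda' = \lambda_2+\lambda_3$.

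For the sharper inclusion I would avoid iterating these two relations — which would only give a factor $1/4$ on both $\Lam_2$ and $\Lam_3$ — and instead project $\lambda$ itself simultaneously: $\lambda = P_1\lambda + P_2\lambda + P_3\lambda$ with each $2P_j\lambda\in\Lam_j$ by the computation above. This gives the clean inclusion $\Lam\subset \tfrac12(\Lam_1+\Lam_2+\Lam_3)$, which a fortiori lies in $\tfrac12\Lam_1+\tfrac12\Lam_2+\tfrac14\Lam_3$ since $\tfrac12\Lam_3\subset\tfrac14\Lam_3$. The only genuinely substantive point — and thus the main obstacle — is the lattice identification $\Lam_j = W_j\cap\Lam$, i.e. verifying that the naive intersections really are the defining lattices of the subtori $E_j$; once that is secured, the statement is a formal consequence of the $R,S$-integrality of $\Lam$ together with the explicit diagonal forms above.
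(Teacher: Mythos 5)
Your proof is correct, and for the refined inclusion it takes a genuinely different --- and in fact sharper --- route than the paper. The first assertion is handled identically in both: $2\la = (I+S)\la + (I-S)\la$, then $2\la' = (I+R^2)\la' + (I-R^2)\la'$. For the inclusion, the paper iterates these two identities to get $\la = \tfrac{\la_1}{2} + \tfrac{\la_2}{4} + \tfrac{\la_3}{4}$ and then improves the coefficient of $\Lam_2$ from $1/4$ to $1/2$ by applying $R$ (which swaps $\Lam_1$ and $\Lam_2$) and invoking uniqueness of the components; the denominator $4$ on $\Lam_3$ survives there because $I+R^2$ is only applied to $\la'$, not to $\la$. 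Your key observation is that all three doubled projectors onto the $W_j$ are themselves integral on $\Lam$, namely $S+I = \diag(2,0,0)$, $-(S+R^2) = \diag(0,2,0)$, $R^2+I = \diag(0,0,2)$, so that each doubled component of $\la$ lands in $W_j\cap\Lam = \Lam_j$ directly. This bypasses the $R$-symmetry/uniqueness step and yields the stronger conclusion $\Lam \subset \tfrac12(\Lam_1\oplus\Lam_2\oplus\Lam_3)$, which implies the stated one since $\tfrac12\Lam_3 = \tfrac14(2\Lam_3)\subset\tfrac14\Lam_3$. That stronger inclusion is exactly the content of the paper's \emph{next} proposition, which the authors derive from the present one plus the condition that no nonzero element $(0,0,d)$ lies in $H$; in your argument the same condition enters equivalently through the identification $\Lam_3 = W_3\cap\Lam$, which you justify correctly from $E_3 = \ker(R-I)^0$ being a subtorus. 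So the substantive inputs are the same (the identifications $\Lam_j = W_j\cap\Lam$, with $j=2$ being the preceding Lemma), but your bookkeeping is more efficient and renders the subsequent proposition of the paper redundant.
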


\begin{proof}
Let $\la \in \Lam$: we can write $$2\la = \underbrace{(I + S)\la}_{=: \la_1 \in \Lam_1} + \underbrace{(I - S)\la}_{=: \la' \in \Lam}.$$
Furthermore, since $\la' \in \im(I - S)$, we obtain $$2\la' = \underbrace{(I + R^2)\la'}_{=: \la_3 \in \Lam_3} + \underbrace{(I - R^2)\la'}_{=: \la_2 \in \Lam \cap W_2 = \Lam_2}.$$
Hence, $\la = \frac{\la_1}2 + \frac{\la_2}4 + \frac{\la_3}4$ for unique $\la_j \in \Lam_j$. 

Applying the automorphism $R$ of $\Lam$ and the unicity of the $\la_j$ yields the result, since $R$ exchanges $\Lam_1$ and $\Lam_2$.\\

\end{proof}

\begin{prop}
We have $$\Lam \subset (1/2 ) \Lam_1 + (1/2 ) \Lam_2 + (1/2)  \Lam_3.$$

\end{prop}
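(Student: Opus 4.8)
The plan is to promote the coefficient $\tfrac14$ in the third slot of the previous proposition to $\tfrac12$, and the natural way to isolate the third coordinate is to exploit that $S$ acts as $-1$ on \emph{both} $W_2$ and $W_3$. The crucial input already at hand is the previous proposition: for every $\la\in\Lam$, writing $\la=\la^{(1)}+\la^{(2)}+\la^{(3)}$ with $\la^{(j)}\in W_j$ the decomposition relative to $V=W_1\oplus W_2\oplus W_3$, the middle component satisfies $2\la^{(2)}\in\Lam_2$ (since $\la^{(2)}\in\tfrac12\Lam_2$), while so far we only know $\la^{(3)}\in\tfrac14\Lam_3$.

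First I would fix $\la\in\Lam$ and form $(I-S)\la$. Since $s$ is an automorphism of $T=V/\Lam$ its linear part satisfies $S\Lam=\Lam$, hence $(I-S)\la\in\Lam$. Because $S=\diag(1,-1,-1)$ with respect to $W_1\oplus W_2\oplus W_3$, the operator $I-S$ kills the $W_1$-part and doubles the other two, giving
$$(I-S)\la=\bigl(0,\,2\la^{(2)},\,2\la^{(3)}\bigr)\in\Lam.$$
Then I would subtract off the already-controlled middle term: by the previous proposition $2\la^{(2)}\in\Lam_2$, so $(0,2\la^{(2)},0)\in\Lam_2\subset\Lam$, and therefore
$$\bigl(0,0,2\la^{(3)}\bigr)=(I-S)\la-\bigl(0,2\la^{(2)},0\bigr)\in\Lam\cap W_3=\Lam_3.$$
This yields $2\la^{(3)}\in\Lam_3$, i.e. $\la^{(3)}\in\tfrac12\Lam_3$; combined with $\la^{(1)}\in\tfrac12\Lam_1$ and $\la^{(2)}\in\tfrac12\Lam_2$ this is exactly $\la\in\tfrac12\Lam_1+\tfrac12\Lam_2+\tfrac12\Lam_3$.

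There is essentially no serious obstacle: the argument is a two-line lattice manipulation once one notices that $I-S$ is the right operator. The only points demanding care are bookkeeping ones — that $(I-S)\la$ genuinely lies in $\Lam$ (this is where $S\Lam=\Lam$ enters), and that the previous proposition is invoked in its sharpened form, namely that $\la^{(2)}\in\tfrac12\Lam_2$ rather than merely $\tfrac14\Lam_2$, so that after doubling it lands in the honest lattice $\Lam_2$ and can be subtracted off to expose the third coordinate. I note in particular that the freeness hypotheses (Lemmas \ref{r-free} and \ref{s-free}) play no role here; they are reserved for the later step, where the third coordinate of $H$ is forced to vanish outright.
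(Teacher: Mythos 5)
Your proposal is correct and takes essentially the same approach as the paper. The paper writes $2\la=\la_1+\la_2+\frac{\la_3}{2}$ with $\la_1\in\Lam_1$, $\la_2\in\Lam_2$, and concludes that the class $(0,0,[\frac{\la_3}{2}])\in H$ must vanish because $E_3$ embeds in $T$ --- which is exactly the fact $\Lam\cap W_3=\Lam_3$ that you invoke for $(I-S)\la-(0,2\la^{(2)},0)$; indeed your element $(I-S)\la$ differs from the paper's $2\la$ only by $(I+S)\la\in\Lam_1$.
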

\begin{proof}
For $\la \in \Lam$ we can write  $\la = \frac{\la_1}2 + \frac{\la_2}2 + \frac{\la_3}4$ for unique $\la_j \in \Lam_j$. 

We now use the property
$$  E_i \hookrightarrow T \Rightarrow \forall \ (0,0,d) \in H , \  d = 0.$$
Indeed, $2 \la = \la_1 + \la_2 + \frac{\la_3}2$, hence  $(0,0,[\frac{\la_3}2]) \in H$ and $\frac{\la_3}2 = 0$
in $E_3$. Equivalently, there is an element $\la'_3 \in \Lam_3$ with $$\frac{\la_3}4 = \frac{\la'_3}2.$$

\end{proof}

\begin{lemma}\label{rs}
Consider  the transformation $rs$:
$$  rs (z) = (-z_2 + a_2 , -z_1 - a_1, -z_3  + c_3).$$ 

The condition that its square is the identity amounts to

$$ (a_1 + a_2, - ( a_1 + a_2 ) , 0) \in H,$$
while the freeness of its action is equivalent to the fact that $H$ contains no element 
of the form 
$$ (w_1 - a_2, w_1 + a_1, w_3) \Leftrightarrow \forall \ (d_1,d_2, d_3) \in H \colon \;\; d_1 + a_2 \neq d_2 -a_1.$$
\end{lemma}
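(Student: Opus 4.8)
The plan is to prove the two assertions separately, in each case working with the affine self-map $rs$ on $T' = E_1 \times E_2 \times E_3$ and then descending to $T = T'/H$ (recall $rs$ sends $H$ to itself, so both $rs$ and $(rs)^2$ descend).

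For the condition on $(rs)^2$, I would simply iterate the given formula. Composing $rs(z) = (-z_2 + a_2, -z_1 - a_1, -z_3 + c_3)$ with itself yields $(rs)^2(z) = (z_1 + a_1 + a_2,\, z_2 - (a_1+a_2),\, z_3)$, so that the linear part $(RS)^2$ is the identity and $(rs)^2$ is the translation by the vector $(a_1 + a_2, -(a_1+a_2), 0)$. Since a translation of $T'$ induces the identity on the quotient $T'/H$ exactly when its translation vector lies in $H$, this gives precisely the stated criterion $(a_1 + a_2, -(a_1+a_2), 0) \in H$.

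For the freeness of $rs$, I would describe its fixed locus on $T$. A class $[z]_H$ is fixed by the induced map iff $z - rs(z) \in H$ in $T'$; I write the difference in this order (rather than $rs(z) - z$) to match the signs in the displayed normal form, which is legitimate since $H = -H$. A direct computation gives $z - rs(z) = (z_1 + z_2 - a_2,\, z_1 + z_2 + a_1,\, 2z_3 - c_3)$. As $z$ ranges over $T'$, the last entry $2z_3 - c_3$ sweeps out all of $E_3$ (multiplication by $2$ is an isogeny, hence surjective), while the first two entries depend only on $v := z_1 + z_2$, which sweeps out all of $E_1$ (take $z_2 = 0$). Hence the values of $z - rs(z)$ are exactly the elements $(w_1 - a_2,\, w_1 + a_1,\, w_3)$ with $w_1 \in E_1$ and $w_3 \in E_3$, and $rs$ has a fixed point iff $H$ meets this set. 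Negating, and writing $d_1 = w_1 - a_2,\ d_2 = w_1 + a_1$ so that $w_1 = d_1 + a_2 = d_2 - a_1$, turns freeness into the final form: for every $(d_1, d_2, d_3) \in H$ one has $d_1 + a_2 \ne d_2 - a_1$.

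The computation itself is routine once Case 1 is fixed; the points needing care are bookkeeping ones. First, the sums $z_1 + z_2$ and $d_1 + a_2$ mixing the first two factors are meaningful only because $E_1 \cong E_2$ (equivalently $\Lam_1 = \Lam_2$ in the chosen coordinates), as established in the Lemma giving $E_1 \cong E_2$, and I must check that the two parametrizing maps $v \mapsto (v + a_2, v - a_1)$ and $z_3 \mapsto 2z_3 - c_3$ are surjective onto the relevant tori. Second, one must track the sign convention $z - rs(z)$ versus $rs(z) - z$: this is the step most likely to introduce an error, and is exactly where the symmetry $H = -H$ is used to land on the stated form $(w_1 - a_2, w_1 + a_1, w_3)$.
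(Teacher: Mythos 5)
Your proof is correct and follows essentially the same route as the paper: the square is computed to be the translation by $(a_1+a_2, -(a_1+a_2), 0)$, and freeness is reduced to the non-existence of $z$ with $z - rs(z) = (z_1+z_2-a_2,\, z_1+z_2+a_1,\, 2z_3-c_3) \in H$, using surjectivity of $z\mapsto z_1+z_2$ and $z_3 \mapsto 2z_3 - c_3$. The paper's proof is just a terser version of the same argument.
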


\begin{proof}
The first condition is straighforward, while the freeness of the action is equivalent to the non existence of $(z_1, z_2, z_3)$
such that
$$  (z_1 + z_2 - a_2, z_2 + z_1 + a_1, 2 z_3 - c_3) \in H.$$
As usual, we observe that for each $w_1, w_3$ there exist $z_1 , z_2, z_3 $ with $z_1 + z_2 = w_1, 2 z_3 - c_3 = w_3.$

\end{proof}

We put together  the conclusions of Lemmas \ref{r-free}, \ref{s-free}, \ref{rs}, 

\begin{itemize}
\item
(i) $(0,0,4c_3) \in H$
\item
(ii) $(2 a_1,0,0) \in H$
\item
(iii) $( a_1+ a_2,-a_1 - a_2,0) \in H$, hence also $( a_1- a_2,a_1 +a_2,0) \in H$.

\end{itemize}

\begin{enumerate}
\item
$H$ contains no element of the form
$(w_1,w_2,c_3) $, 
\item
nor of the form $(w_1,w_2,2 c_3) $
\item
nor of the form $(a_1,w_2,w _3) $
\item
nor of the form $(w_1,w_2,w_3) $ with $w_1 + a_2 = w_2 - a_1$.

\end{enumerate}

It follows from (iii) and (3) that $a_2 \neq 0$. While the condition that each element of $H$ which has
two coordinates equal to zero is indeed zero (since $E_i $ embeds in $T$!) imply 
$$  2 a_1 = 0, 4 c_3 = 0.$$

By conditions (1), (2), (3) the elements $a_1$, $c_3$ have respective orders exactly $2,4$. Moreover:

\begin{itemize}
\item
 (4) and (i) imply that
 $a_1 + a_2 \neq 0$
\item
(ii), (iii) and the fact that $H$ has exponent 2 implies $2a_2= 2a_1=0 $, $2a_1 + 2a_2 = 0$.
Hence $a_1 \neq  a_2$ are nontrivial 2-torsion elements.

\end{itemize}

We have thus obtained the desired elements
$$ h : = a_1, k : = a_2, h' : = c_3.$$

It suffices to show that $H$ is generated by $\om := (h + k,h + k,0) = (a_1 + a_2, a_1+ a_2, 0)$.

Observe first that $\om \in H$, by condition (iii). 

Condition (4) implies that the first coordinate of an element of $H$ must be a multiple of $(a_1 + a_2)$: since it cannot equal
$a_1$, by condition (3), and if it equals $a_2$, we can add $\om$ and obtain an element of $H$ with first coordinate $a_1$.
Using $R$, we infer that both coordinates must be a multiple of $(a_1 + a_2)$. Possibly adding $\om$, we may assume that $w_1 = 0$:
then by (4) we conclude that also $w_2 = 0$. Finally, the condition that each element of $H$ which has
two coordinates equal to zero is indeed zero, show that $H$ is then generated by $\om$, as we wanted to show.

The last assertions  of the main theorem follow now in a straightforward way (see \cite{Catanese-Corvaja} concerning general properties of
Teichm\"uller spaces of hyperelliptic manifolds).


\begin{thebibliography}{COT13}
 	\bibitem[CC17]{Catanese-Corvaja} \textsc{F. Catanese, P. Corvaja}: Teichm\"uller spaces of generalized hyperelliptic manifolds. Complex and symplectic geometry, 39-49, Springer INdAM Ser., 21, Springer, Cham (2017).
 	
  	\bibitem[CD18]{C-D} \textsc{F. Catanese, A. Demleitner}: Hyperelliptic Threefolds with group $D_4$, the Dihedral group of order $8$. Preprint (2018), \href{https://arxiv.org/abs/1805.01835}{arXiv:1805.01835}. 
 	
 	\bibitem[DHS08]{Dekimpe} \textsc{K. Dekimpe, M. Ha\l enda, A. Szczepa\'nski}: K\"ahler flat manifolds. J. Math. Soc. Japan 61 (2009), no. 2, 363-377.
 	
	\bibitem[Fu88]{Fujiki} \textsc{A. Fujiki}: Finite automorphism groups of complex tori of dimension two. Publ. Res. Inst. Math. Sci., 24 (1988), 1-97.

	
	\bibitem[La01]{Lange} \textsc{H. Lange}: Hyperelliptic varieties. Tohoku Math. J. (2) 53 (2001), no. 4, 491-510.
	
	\bibitem[UY76]{U-Y} \textsc{K. Uchida, H. Yoshihara}: Discontinuous groups of affine transformations of $\CC^3$. Tohoku Math. J. (2) 28 (1976), no. 1, 89-94.

\end{thebibliography}
\end{document}